\title{Semigroup expansions for non-selfadjoint Schr\"odinger operators}
\author{Ben Bellis \\Department of Mathematics \\University of Michigan \\ Ann Arbor \\MI 48109, USA \\\small bbellis@umich.edu \and
Michael Hitrik\\Department of Mathematics \\University of California \\ Los Angeles
\\CA 90095-1555, USA\\\small hitrik@math.ucla.edu}
\date{}
\def\wrtext#1{\relax\ifmmode{\leavevmode\hbox{#1}}\else{#1}\fi}
\def\abs#1{\left|#1\right|}
\def\begeq{\begin{equation}}
\def\endeq{\end{equation}}
\def\part#1{\frac{\partial}{\partial #1}}
\def\norm#1{||\,#1\,||}
\newcommand{\real}{\mbox{\bf R}}
\newcommand{\comp}{\mbox{\bf C}}
\newcommand{\nat}{\mbox{\bf N}}
\renewcommand{\exp}{\mbox{\rm exp\,}}
\newtheorem{dref}{Definition}[section]
\newtheorem{lemma}[dref]{Lemma}
\newtheorem{theo}[dref]{Theorem}
\newenvironment{proof}{\vspace{.3cm}\noindent{{\em Proof:}}}{\hfill$\Box$}
\begin{document}
\maketitle

\vspace*{1cm}
\noindent
{\bf Abstract}: A large time expansion for the propagator associated to a semiclassical non-selfadjoint magnetic Schr\"odinger operator is established, in terms of the low lying eigenvalues of the operator.

\vskip 2.5mm
\noindent {\bf Keywords and Phrases:} Non-selfadjoint Schr\"odinger operator, resolvent estimate, spectrum, evolution semigroup, eigenfunction expansion, Gearhart-Pr\"uss theorem

\vskip 2mm
\noindent
{\bf Mathematics Subject Classification 2000}: 35P10, 47D06, 47D08, 81Q20

\tableofcontents
\section{Introduction and statement of result}
\setcounter{equation}{0}
The purpose of this note is to apply the spectral results of~\cite{HiPr2},~\cite{HiPr3} together with the resolvent bounds obtained in ~\cite{B1},~\cite{B2}, to establish an expansion for the evolution semigroup associated to a class of semiclassical non-selfadjoint magnetic Schr\"odinger operators, in the limit of large times. It is well known that the exponential decay of a contraction semigroup on a Hilbert space is closely connected to the resolvent estimates for the corresponding semigroup generator, see~\cite{HeSj10},~\cite{Nier14}, and the references given there. Restricting the attention to the case of generators given by semiclassical pseudodifferential operators on $\real^n$, relevant here, let us recall roughly some general resolvent bounds provided by the abstract operator theory, following~\cite{DeSjZw},~\cite{Viola10}. Let $P = p^w(x,hD_x)$ be the semiclassical Weyl quantization of a complex-valued symbol $p\in C^{\infty}(\real^{2n}_{x,\xi})$ with ${\rm Re}\, p\geq 0$, belonging to a suitable symbol class and satisfying ellipticity conditions at infinity in $(x,\xi)$. Then we have
\begeq
\label{eq0.1}
\norm{(z-P)^{-1}}_{{\cal L}(L^2({\bf R}^n), L^2({\bf R}^n))}\leq \exp({\cal O}(1) h^{-n}) \prod_{\lambda \in {\rm Spec}(P)\cap \widetilde{\Omega}} \abs{z-\lambda}^{-1}, \quad
\endeq
for $z\in \Omega \subset \subset \widetilde{\Omega}\subset \subset \comp$. Under mild additional assumptions, guaranteeing the maximal accretivity of $P$, we may define the evolution semigroup $e^{-tP/h}$, $t\geq 0$, and represent it as the inverse Laplace transform of the resolvent, via a Cauchy integral formula,
\begeq
\label{eq0.2}
e^{-tP/h} = \frac{1}{2\pi i} \int_{\gamma} e^{-tz/h} (z-P)^{-1}\, dz.
\endeq
Here $\gamma = a + i\real$, $a<0$. We would like to obtain an expansion for $e^{-tP/h}$ by pushing the contour of integration in (\ref{eq0.2}) to the right, and here (\ref{eq0.1}) suggests that, if valid, such an expansion may only be of interest for $t\gg h^{-n}$. Now sharper resolvent estimates have been derived for non-selfadjoint $h$-pseudodifferential operators, under non-trapping and subellipticity conditions, when the spectral parameter varies in $h$-dependent regions close to the boundary of the semiclassical pseudospectrum $\overline{p(\real^{2n})}$. See~\cite{DeSjZw},~\cite{Sj10},~\cite{Sj_book}, and also~\cite{HeSjSt}. The recent results of~\cite{B1},~\cite{B2} establish polynomial resolvent estimates for a broad class of non-selfadjoint semiclassical magnetic Schr\"odinger operators in an unbounded parabolic region near the imaginary axis, and these works have been the starting point for this note. Let us now proceed to describe the assumptions and state the main results.

\bigskip
\noindent
Let us consider
\begeq
\label{eq1.1}
p(x,\xi) = \left(\xi - A(x)\right)^2 + V(x),\quad (x,\xi) \in \real^{2n}.
\endeq
Here $V = V_1 + iV_2$, $V_1,V_2\in C^{\infty}(\real^n;\real)$, is a complex-valued electric potential and $A = (A_1,\ldots\, A_n) \in C^{\infty}(\real^n, \real^n)$ is a real-valued magnetic potential. We shall assume that
\begeq
\label{eq1.2}
V_1 \geq 0.
\endeq
We shall furthermore impose the following growth conditions on $A_j$, $1\leq j \leq n$, and $V$,
\begeq
\label{eq1.3}
\nabla A_j(x) = {\cal O}(1),
\endeq
\begeq
\label{eq1.4}
\partial^{\alpha} A_j(x) = {\cal O}\left(\langle{x\rangle}^{-1}\right), \quad \abs{\alpha}\geq 2,
\endeq
\begeq
\label{eq1.5}
\partial^{\alpha} V(x) = {\cal O}(1), \quad \abs{\alpha}\geq 2.
\endeq

\medskip
\noindent
Following~\cite{B1},~\cite{B2}, we shall also assume that
\begeq
\label{eq1.6}
\abs{V_2(x)} \leq {\cal O}(1)\left(1 + V_1(x) + \abs{V_2'(x)}^2 \right).
\endeq

\medskip
\noindent
Associated to the symbol $p$ in (\ref{eq1.1}) is the magnetic Schr\"odinger operator given as the semiclassical Weyl quantization of $p$,
\begeq
\label{eq1.7}
P = p^w(x,hD_x) = \left(hD_x - A(x)\right)^2 + V(x), \quad D_x = i^{-1} \partial_x,\quad 0 < h \leq 1.
\endeq
From ~\cite[Section 6]{B2} let us recall that under the assumptions (\ref{eq1.3}) -- (\ref{eq1.5}), the $L^2$-graph closure of $P$ on the Schwartz space ${\cal S}(\real^n)$, still denoted by $P$, agrees with the maximal closed realization of $P$, with the domain
\begeq
\label{eq1.8}
{\cal D}(P) = \{u\in L^2(\real^n); Pu \in L^2(\real^n)\}.
\endeq
We have ${\rm Re}\, P \geq 0$ in view of (\ref{eq1.2}), and it follows that when equipped with the domain (\ref{eq1.8}), the operator $P$ is maximally accretive. An application of the Hille-Yosida theorem allows us to conclude that the strongly continuous contraction semigroup
\begeq
\label{eq1.9}
e^{-tP/h}: L^2(\real^n) \rightarrow L^2(\real^n),\quad t \geq 0,
\endeq
is well defined.

\medskip
\noindent
{\it Remark}. The maximal accretivity of the magnetic Schr\"odinger operator $P$ in (\ref{eq1.7}) holds in great generality, assuming only that (\ref{eq1.2}) holds, see~\cite{HeNo} and references given there.

\medskip
\noindent
In this note, we shall be concerned with the behavior of the evolution semigroup (\ref{eq1.9}) in the limit of large times. We shall be particularly interested in the relation between the exponential decay of the semigroup and the spectrum of $P$ near the imaginary axis.

\medskip
\noindent
To this end, let us now proceed to introduce some additional assumptions, which will allow us to obtain a precise description of the spectrum of $P$ in a disc around the origin of radius ${\cal O}(h)$, and an estimate for the resolvent of $P$ in the disc, as well as further away from this set, as a consequence of the results of~\cite{HiPr2},~\cite{HiPr3},~\cite{B2}. See also~\cite{HeSjSt}. Let us assume that there exists $C > 1$ such that
\begeq
\label{eq1.10}
V_1(x) \geq \frac{1}{C} \left(1 + \abs{V_2'(x)}^2\right),\quad \abs{x}\geq C.
\endeq
We assume furthermore that the set $V_1^{-1}(0) \subset \real^n$ is finite, so that
\begeq
\label{eq1.11}
V_1^{-1}(0) = \left\{x_1,\ldots\, x_N\right\}.
\endeq
In view of (\ref{eq1.2}) we see that $\nabla V_1(x_j) = 0$, $1\leq j \leq N$, and we shall assume that the imaginary part of the potential also vanishes to the second order along $V_1^{-1}(0)$,
\begeq
\label{eq1.12}
V_2(x_j) = \nabla V_2(x_j) = 0,\quad 1 \leq j \leq N.
\endeq
The Taylor expansion of $V$ at $x_j$, $1\leq j \leq N$, then has the form
\begeq
\label{eq1.13}
V(x_j + y) = \frac{1}{2} V''(x_j)y\cdot y + {\cal O}(y^3),\quad y\rightarrow 0.
\endeq
Here the complex symmetric matrix $V''(x_j)$ is such that ${\rm Re\,} V''(x_j) \geq 0$, in view of (\ref{eq1.2}).

\medskip
\noindent
Our starting point is the following result, which is a consequence of~\cite[Theorem 1]{HiPr2} and~\cite[Theorem 1.1]{HiPr3}.

\begin{theo}
\label{theo_spectra}
Let us make the assumptions {\rm (\ref{eq1.2})} -- {\rm (\ref{eq1.6})}, {\rm (\ref{eq1.10})}, {\rm (\ref{eq1.11})}, and {\rm (\ref{eq1.12})}. Assume furthermore that the complex symmetric matrix $V''(x_j)$ introduced in {\rm (\ref{eq1.13})} is invertible, for $1\leq j \leq N$. Let $C>0$. Then there exists $h_0>0$ such that for all $h\in (0,h_0]$, the spectrum of the operator $P$ in the open disc $D(0,Ch) = \{z\in \comp; \abs{z} < Ch\}$, is discrete, and the eigenvalues are of the form
\begeq
\label{eq1.14}
\lambda_{j,k} \sim h \left(\mu_{j,k} + h^{1/N_{j,k}} \mu_{j,k,1} + h^{2/N_{j,k}} \mu_{j,k,2} +\ldots\right),\,\, 1\leq j \leq N.
\endeq
Here $\mu_{j,k}$ are the eigenvalues in the disc $D(0,C)$ of the quadratic differential operator
\begeq
\label{eq1.15}
Q_j = \left(D_x - A'(x_j) x\right)^2 + \frac{1}{2} V''(x_j)x \cdot x, \quad 1 \leq j \leq N,
\endeq
repeated with their algebraic multiplicity, and $N_{j,k}$ is the dimension of the corresponding generalized eigenspace. (Possibly after changing $C>0$, we may assume that $\abs{\mu_{j,k}}\neq C$ for all $k$, $1 \leq j \leq N$.) Furthermore, for each $\delta > 0$, the following resolvent estimate holds,
\begeq
\label{eq1.16}
\norm{\left(P - z\right)^{-1}}_{{\cal L}(L^2, L^2)} \leq {\cal O}_{\delta}\left(\frac{1}{h}\right), \quad z\in D(0,Ch),\quad {\rm dist}(z, {\rm Spec}(P))\geq \delta h.
\endeq
\end{theo}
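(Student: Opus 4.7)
The plan is to reduce both the eigenvalue expansion (\ref{eq1.14}) and the resolvent estimate (\ref{eq1.16}) to the spectral results of \cite{HiPr2} and \cite{HiPr3}, by verifying that the hypotheses of \cite[Theorem 1]{HiPr2} and \cite[Theorem 1.1]{HiPr3} are in force under our assumptions (\ref{eq1.2})--(\ref{eq1.6}), (\ref{eq1.10})--(\ref{eq1.12}), together with the invertibility of each $V''(x_j)$.

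First I would analyze one well at a time. By (\ref{eq1.12}) and (\ref{eq1.13}), the Taylor expansion of $V$ at $x_j$ begins at second order, and the complex symmetric matrix $V''(x_j)$ has nonnegative real part in view of (\ref{eq1.2}); the invertibility assumption then makes the quadratic symbol of $Q_j$ in (\ref{eq1.15}) elliptic, with numerical range confined to a proper subsector of the closed right half-plane. Standard facts for elliptic quadratic operators give that each $Q_j$ has discrete spectrum consisting of eigenvalues of finite algebraic multiplicity, which is the basic input for (\ref{eq1.14}). The expansion itself is supplied by \cite[Theorem 1]{HiPr2} in the semisimple case, where it proceeds in integer powers of $h$, and by \cite[Theorem 1.1]{HiPr3} in the general case, where the fractional exponent $1/N_{j,k}$ reflects the size of the Jordan block at $\mu_{j,k}$.

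Next I would check that neither the other wells nor the exterior region $\abs{x}\ge C$ can produce additional spectrum of $P$ in $D(0,Ch)$. Here (\ref{eq1.10}), combined with the growth conditions (\ref{eq1.3})--(\ref{eq1.5}) and the assumption (\ref{eq1.6}), is exactly the setup of \cite{B1,B2} and yields the global polynomial resolvent bound of \cite{B2} in a parabolic region near the imaginary axis; as a consequence, the spectrum of $P$ inside $D(0,Ch)$ must be concentrated in small neighborhoods of the points $\lambda_{j,k}$ arising from the wells. The resolvent estimate (\ref{eq1.16}) itself is then furnished by \cite[Theorem 1.1]{HiPr3} via a Grushin problem whose auxiliary spaces are spanned by quasimodes associated to the $\lambda_{j,k}$: the effective Hamiltonian is a finite matrix whose determinant vanishes precisely at the true eigenvalues of $P$ in $D(0,Ch)$, and the hypothesis $\dist(z,\Spec(P)) \ge \delta h$ ensures that this matrix is invertible with inverse of norm $\mathcal{O}_\delta(1)$, which in turn translates to the bound $\mathcal{O}_\delta(1/h)$ on $(P-z)^{-1}$.

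The main obstacle, to the extent that one exists for a statement announced as a direct consequence of prior work, is a bookkeeping verification: one must confirm that (\ref{eq1.3})--(\ref{eq1.6}) are simultaneously strong enough for both the near-origin analysis of \cite{HiPr2,HiPr3} and the global parabolic-region bound of \cite{B2}, and that patching the two pieces via the Grushin scheme yields a single estimate uniform on all of $D(0,Ch)$. Since (\ref{eq1.6}) is imposed precisely to match the hypotheses of \cite{B1,B2}, and (\ref{eq1.10})--(\ref{eq1.12}) to match those of \cite{HiPr2,HiPr3}, this verification is expected to proceed routinely.
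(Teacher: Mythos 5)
There is a genuine gap at the heart of your reduction to \cite{HiPr2}, \cite{HiPr3}: you assert that the invertibility of $V''(x_j)$ makes the quadratic symbol of $Q_j$ in (\ref{eq1.15}) \emph{elliptic}, with numerical range in a proper subsector of the right half-plane, and you then invoke ``standard facts for elliptic quadratic operators'' to get discreteness of ${\rm Spec}(Q_j)$. This is false in general: with only ${\rm Re}\, V''(x_j)\geq 0$ and $V''(x_j)$ invertible, the real zero set of $q_j(y,\eta)=(\eta-A'(x_j)y)^2+\frac{1}{2}V''(x_j)y\cdot y$ is $\{(y,A'(x_j)y):\, V_1''y\cdot y=0,\ V_2''y\cdot y=0\}$, which need not reduce to the origin (take, e.g., $V''=i\,{\rm diag}(1,-1)$, which is invertible, has ${\rm Re}\,V''=0$, and whose symbol vanishes on the lines $y_1=\pm y_2$, $\eta=A'y$; its range also meets the whole imaginary axis, so there is no proper subsector). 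The operators $Q_j$ are in general \emph{non-elliptic}, as the paper stresses, and the hypothesis actually required by \cite[Theorem 1]{HiPr2}, \cite[Theorem 1.1]{HiPr3} is the ellipticity of $q_j$ along its \emph{singular space} (\ref{eq2.9}). The substantive step of the paper's proof is precisely Lemma \ref{singular}: computing $H_{{\rm Im}\,q}^k{\rm Re}\,q$ one finds $S=\{(y,Ay):\,V_1y\cdot y=0,\ V_2y=0\}$, the restriction of $q$ to $S$ vanishes, so partial ellipticity means $S=\{0\}$, which is equivalent to ${\rm Ker}\,V\cap\real^n=\{0\}$ and hence, by \cite[Proposition 4.4]{H95} (the kernel of a complex symmetric $V$ with ${\rm Re}\,V\geq 0$ is the complexification of its real points), to the invertibility of $V$. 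Your proposal contains no substitute for this lemma, and without it the cited theorems simply do not apply.

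A secondary discrepancy: the paper does not use \cite{B1}, \cite{B2} for Theorem \ref{theo_spectra} at all; those resolvent bounds enter only in the proof of Theorem \ref{theo_main}, and in any case the parabolic-region estimate (\ref{eq2.14.1}) is restricted to $\abs{z}\geq Ch$, so it cannot by itself exclude spectrum or control the resolvent throughout $D(0,Ch)$. What is actually needed, and what the paper verifies, are the global hypotheses of \cite{HiPr2}, \cite{HiPr3}: one constructs the order function $m$ in (\ref{eq2.1}), proves via the elementary Lemma 2.1 (using $\nabla V_1={\cal O}(V_1^{1/2})$) that it is an order function, checks from (\ref{eq1.2})--(\ref{eq1.6}) that $p\in S(m)$ with ${\rm Re}\,p\geq 0$, and uses (\ref{eq1.10}) to get the ellipticity at infinity (\ref{eq2.5}); together with the finiteness of $({\rm Re}\,p)^{-1}(0)$ and Lemma \ref{singular}, this is what makes the application of \cite[Theorem 1]{HiPr2} and \cite[Theorem 1.1]{HiPr3} (which already yield both (\ref{eq1.14}) and (\ref{eq1.16}) on all of $D(0,Ch)$) legitimate. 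Dismissing this as routine bookkeeping while resting the well analysis on a false ellipticity claim is where your argument would fail.
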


\medskip
\noindent
{\it Remark}. As we shall discuss in Sections \ref{proof} and \ref{spectra}, the invertibility of the complex symmetric matrix $V''(x_j)$ implies that the spectrum of the non-selfadjoint non-elliptic quadratic differential operator $Q_j$ in (\ref{eq1.15}) is discrete and can be determined explicitly. The eigenvalues $\mu_{j,k}$ in (\ref{eq1.14}) satisfy ${\rm Re}\, \mu_{j,k} > 0$ and they are confined to an angular sector of the form $\abs{\arg z} \leq \theta_0 < \pi/2$.

\bigskip
\noindent
The following is the main result of this note.
\begin{theo}
\label{theo_main}
Let $P = (hD_x - A(x))^2 + V(x)$ be such that the assumptions {\rm (\ref{eq1.2})} -- {\rm (\ref{eq1.6})}, {\rm (\ref{eq1.10})}, {\rm (\ref{eq1.11})}, and {\rm (\ref{eq1.12})} hold. Assume that the complex symmetric matrix $V''(x_j)$ in {\rm (\ref{eq1.13})} is invertible, for $1\leq j \leq N$. Recall the quadratic approximations $Q_j$ in {\rm (\ref{eq1.15})} with the eigenvalues $\mu_{j,k}$ and let $a>0$ be such that $a\neq {\rm Re}\, \mu_{j,k}$ for all $k$, $1\leq j \leq N$. Let us introduce the finite set $\Lambda = \{\mu_{j,k}; \, {\rm Re}\, \mu_{j,k} < a\}$ and assume that the eigenvalues $\mu_{j,k}\in \Lambda$ are simple and distinct. Let $\lambda_{j,k} = {\cal O}(h)$ be the eigenvalues of $P$ associated to $\mu_{j,k}\in \Lambda$ according to Theorem {\rm \ref{theo_spectra}}. Then $\lambda_{j,k} = h\left(\mu_{j,k} + {\cal O}(h)\right)$ are also simple and we have in ${\cal L}(L^2(\real^n), L^2(\real^n))$, uniformly as $t\geq 0$, $h\rightarrow 0^+$,
\begeq
\label{eq1.17}
e^{-tP/h} = \sum_{\mu_{j,k}\in \Lambda} e^{-t\lambda_{j,k}/h} \Pi_{\lambda_{j,k}} + {\cal O}(e^{-ta}).
\endeq
Here
\begeq
\label{eq1.18}
\Pi_{\lambda_{j,k}} = \frac{1}{2\pi i } \int_{\abs{z - \lambda_{j,k}} = \varepsilon h} \left(z - P\right)^{-1}\, dz = {\cal O}(1): L^2(\real^n) \rightarrow L^2(\real^n), \quad 0 < \varepsilon \ll 1,
\endeq
is the corresponding rank one spectral projection.
\end{theo}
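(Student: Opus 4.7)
\medskip
\noindent
\textbf{Proof plan.} The plan is to start from the Cauchy integral (Laplace inversion) representation of the semigroup,
\[
e^{-tP/h} = \frac{1}{2\pi i}\int_{\gamma_0} e^{-tz/h}(z-P)^{-1}\,dz,
\]
valid on a vertical contour $\gamma_0 = \{\Re z = -\beta\}$ with $\beta > 0$, lying strictly to the left of $\Spec(P)$ and interpreted in an appropriate strong sense using the domain of a suitable power of $P$. I would then shift $\gamma_0$ rightward to the line $\gamma_a = \{\Re z = ah\}$, picking up residues at the eigenvalues $\lambda_{j,k}$ associated to $\mu_{j,k}\in \Lambda$, and show that the remaining integral along $\gamma_a$ defines a bounded operator of norm ${\cal O}(e^{-ta})$, uniformly in $t\geq 0$ and $h\to 0^+$.

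Before performing the deformation, I would verify that each $\lambda_{j,k}$ associated to $\mu_{j,k}\in \Lambda$ is simple and that the projection $\Pi_{\lambda_{j,k}}$ in (\ref{eq1.18}) has rank one and norm ${\cal O}(1)$. Since the $\mu_{j,k}\in \Lambda$ are simple and distinct by hypothesis and $\lambda_{j,k} = h(\mu_{j,k} + {\cal O}(h))$ by Theorem~\ref{theo_spectra}, for $h$ small these eigenvalues are mutually separated by a distance $\gtrsim h$; standard Kato perturbation theory, combined with the resolvent bound (\ref{eq1.16}), forces each to be simple. The circle $\{\abs{z - \lambda_{j,k}} = \varepsilon h\}$ has length ${\cal O}(h)$ while $\norm{(z - P)^{-1}}_{{\cal L}(L^2,L^2)} = {\cal O}(h^{-1})$ on it by (\ref{eq1.16}), so $\Pi_{\lambda_{j,k}}$ is a rank one projection of norm ${\cal O}(1)$. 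Pushing $\gamma_0$ across the strip $\{-\beta < \Re z < ah\}$ and using that $\Spec(P) \subset \{\Re z \geq 0\}$ by maximal accretivity, together with Theorem~\ref{theo_spectra} (which localizes $\Spec(P)\cap D(0,Ch)$ to the $\lambda_{j,k}$), the residue theorem yields
\[
e^{-tP/h} = \sum_{\mu_{j,k}\in \Lambda} e^{-t\lambda_{j,k}/h}\,\Pi_{\lambda_{j,k}} \; + \; \frac{1}{2\pi i}\int_{\gamma_a} e^{-tz/h}(z-P)^{-1}\,dz.
\]

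The main obstacle is to bound the integral along $\gamma_a$ in operator norm by ${\cal O}(e^{-ta})$, uniformly in $t \geq 0$ and $h$. Since $\abs{e^{-tz/h}} = e^{-ta}$ on $\gamma_a$ independently of $\Im z$, pointwise boundedness of the resolvent is not enough; control integrable in $\Im z$ is required. My plan is to split $\gamma_a$ at $\abs{z} = Ch$: on the local piece, of length ${\cal O}(h)$, (\ref{eq1.16}) directly produces an ${\cal O}(e^{-ta})$ contribution. On the tail $\abs{z} \geq Ch$ I would invoke the polynomial resolvent estimates of~\cite{B1},~\cite{B2}, valid in an unbounded parabolic neighborhood of the imaginary axis, which give decay of $\norm{(z-P)^{-1}}_{{\cal L}(L^2,L^2)}$ in $\abs{\Im z}$. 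To convert these pointwise bounds into an operator-norm estimate, I would exploit that the finite-rank projection $\Pi := \sum_{\mu_{j,k}\in \Lambda}\Pi_{\lambda_{j,k}}$ commutes with $P$ and with $e^{-tP/h}$, so that the remaining integral equals $e^{-tP/h}(I - \Pi)$, and apply a Gearhart--Pr\"uss type argument to the restriction $P|_{(I-\Pi)L^2(\real^n)}$, whose spectrum is contained in $\{\Re z \geq ah\}$ and whose resolvent along $\gamma_a$ is uniformly controlled by the combination of (\ref{eq1.16}) and the Bellis estimates. The delicate point is precisely this gluing: the bounds from~\cite{B1},~\cite{B2} in the parabolic region must be matched cleanly with the local description of Theorem~\ref{theo_spectra} on $\gamma_a$, uniformly in $h$, so that a single finite Gearhart--Pr\"uss constant emerges and the resulting exponential decay rate is exactly $a$.
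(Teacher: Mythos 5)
Your plan is essentially the paper's own argument: the decisive step in both is the uniform ${\cal O}(1/h)$ resolvent bound along the line ${\rm Re}\, z = ah$, obtained by gluing the local estimate (\ref{eq1.16}) on $\abs{z}\leq Ch$ with the subelliptic bound of~\cite{B2} in the unbounded parabolic region (which requires taking $0<a<BC^{1/3}$ so the line stays inside that region), followed by the quantitative Gearhart--Pr\"uss theorem of~\cite{HeSj10}; the paper simply invokes~\cite[Theorem 1.6, Proposition 2.1]{HeSj10} directly, so your Laplace-inversion/contour-deformation preamble is superfluous --- your own reduction of the tail to $e^{-tP/h}(I-\Pi)$ already amounts to the paper's route, and the rank-one statement and $\Pi_{\lambda_{j,k}}={\cal O}(1)$ follow exactly as you say. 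The one verification the paper carries out that you pass over silently is that the hypothesis of~\cite{B2} is actually satisfied: assumption (\ref{eq1.6}) must be upgraded to $\abs{V_2}\leq {\cal O}(1)(V_1+\abs{V_2'}^2)$ without the constant term, which the paper deduces from (\ref{eq1.10}) for large $x$ and from (\ref{eq1.12})--(\ref{eq1.13}) together with the invertibility of $V''(x_j)$ near $V_1^{-1}(0)$.
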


\medskip
\noindent
{\it Remark}. Theorem \ref{theo_main} is inspired by and closely related to~\cite[Theorem 1.4]{HeSjSt}, where an analogous result is established for a broad class of non-selfadjoint non-elliptic semiclassical pseudodifferential operators with double characteristics, including the Kramers-Fokker-Planck operator. We would therefore like to observe that Theorem \ref{theo_main} above does not seem to follow directly from the analysis in~\cite{HeSjSt}. Indeed, one of the assumptions made in~\cite[Theorem 1.4]{HeSjSt}, namely assumption (H3) there, demands that the principal symbol $p = p_1 + ip_2$ of the operator in question should be such that $\partial^{\alpha} p_1 = {\cal O}(1)$, $\abs{\alpha}\geq 2$. This assumption is not satisfied in the magnetic Schr\"odinger case, for $p$ given by (\ref{eq1.1}) and indeed, a crucial role in our proof of Theorem \ref{theo_main} is played by the recent work~\cite{B2} of the first-named author, where resolvent estimates of subelliptic type are established for the operator $P$ in (\ref{eq1.7}), in an unbounded parabolic neighborhood of the imaginary axis, away from a bounded region near the origin.

\medskip
\noindent
{\it Remark}. Let us assume that $N=1$ in (\ref{eq1.11}) and that $V_1^{-1}(0) = \{0\}$. Associated to the doubly characteristic point $(0,A(0))\in \real^{2n}$ of the symbol $p$ in (\ref{eq1.1}) is the quadratic approximation
\begeq
\label{eq1.19}
Q = \left(D_x - A'(0)x\right)^2 + \frac{1}{2} V''(0)x\cdot x,
\endeq
where ${\rm Re}\, V''(0)\geq 0$, $V''(0)$ invertible. It follows from Lemma \ref{singular} below combined with~\cite[Theorem 2.1]{OtPaPr} that the first eigenvalue $\mu_0\in \comp$ in the bottom of the spectrum of the operator $Q$ has algebraic multiplicity one. The corresponding eigenvalue $\lambda_0$ of the operator $P$ in Theorem \ref{theo_spectra} is then also simple, with
$$
\lambda_0 \sim h \left(\mu_0 + h\mu_{0,1} + \ldots\right),
$$
and an application of Theorem \ref{theo_main} allows us to write in ${\cal L}(L^2(\real^n), L^2(\real^n))$,
$$
e^{-tP/h} = e^{-t\lambda_0/h} \Pi_{\lambda_0} + {\cal O}(e^{-ta}), \quad t \geq 0.
$$
Here $0 < {\rm Re}\, \mu_0 < a < {\rm Re}\, \mu_0 + \tau_0$, with $\tau_0 > 0$ being the spectral gap for the quadratic operator $Q$ in (\ref{eq1.19}), so that
$$
{\rm Spec}(Q)\backslash \{\mu_0\} \subset \{z\in \comp; {\rm Re}\, z \geq {\rm Re}\, \mu_0 + \tau_0\}.
$$

\bigskip
\noindent
Decay estimates for evolution semigroups associated to semiclassical non-selfadjoint Schr\"odinger operators have been studied recently in~\cite{Henry},~\cite{AlHePan},~\cite{AlGrHe}. See also~\cite{Wang}. While the techniques of~\cite{HiPr2},~\cite{HiPr3},~\cite{B1},~\cite{B2}, on which this note is based, are somewhat different, here too we rely essentially on the quantitative version of the Gearhart-Pr\"uss theorem, due to~\cite{HeSj10}.

\bigskip
\noindent
The plan of this note is as follows. In Section 2, after verifying that Theorem \ref{theo_spectra} is a consequence of the results of~\cite{HiPr2},~\cite{HiPr3}, we establish Theorem \ref{theo_main} by combining Theorem \ref{theo_spectra} with the results of~\cite{B1},~\cite{B2}, and~\cite{HeSj10}, see also~\cite{Helffer_book},~\cite{Sj_book}. In Section 3 we discuss quadratic non-selfadjoint magnetic Schr\"odinger operators of the form (\ref{eq1.19}) and obtain an explicit description of their spectra, following~\cite{HiPr1}.

\bigskip
\noindent
{\bf Acknowledgements}. This paper was completed in September 2018, when the second-named author was visiting Universit\'e Paris 13.
It is a great pleasure for him to thank its Mathematics Department for the generous hospitality and excellent working conditions. His best thanks are due especially to Francis Nier for the stimulating discussions.

\section{Resolvent estimates and proof of Theorem \ref{theo_main}}
\label{proof}
\setcounter{equation}{0}
In the beginning of this section, we shall discuss how Theorem \ref{theo_spectra} can be deduced from the results of~\cite{HiPr2},~\cite{HiPr3}. When doing so, let us set
\begeq
\label{eq2.1}
m(X) = 1 + \left(\xi - A(x)\right)^2 + V_1(x) +  \abs{V_2'(x)}^2, \quad X = (x,\xi)\in \real^{2n}.
\endeq
Using (\ref{eq1.2}) and (\ref{eq1.5}) we see that
\begeq
\label{eq2.1.1}
\nabla V_1 = {\cal O}(V_1^{1/2}),
\endeq
and it follows that the function $1\leq m \in C^{\infty}(\real^{2n})$ satisfies $\nabla m = {\cal O}(m^{1/2})$. The following elementary observation
shows that $m$ is then an order function, in the sense that for some $C>0$, we have
$$
m(X) \leq C\langle{X-Y\rangle}^2 m(Y),\quad X,Y\in \real^{2n},
$$
see~\cite{DiSj}. Here $\langle{X\rangle} = \left(1+\abs{X}^2\right)^{1/2}$.

\begin{lemma}
Let $1\leq f\in C^{\infty}(\real^N)$ be such that
\begeq
\label{eq2.2}
\nabla f = {\cal O}(f^{\gamma}),
\endeq
for some $0 < \gamma < 1$. Then there exists $C>0$ such that
\begeq
\label{eq2.3}
f(X) \leq C \langle{X-Y\rangle}^{1/(1-\gamma)} f(Y),\quad X,Y\in \real^N.
\endeq
\end{lemma}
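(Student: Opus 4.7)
The plan is to exploit the differential inequality along a straight line segment between $X$ and $Y$, then integrate. Fix $X, Y \in \real^N$ and set $\phi(t) = f(Y + t(X-Y))$ for $t \in [0,1]$. By the chain rule and the hypothesis (\ref{eq2.2}),
$$
\phi'(t) = \nabla f(Y + t(X-Y)) \cdot (X-Y) = {\cal O}(1)\, \phi(t)^{\gamma}\, \abs{X-Y}.
$$
Since $\phi \geq 1$ and $0 < \gamma < 1$, the function $\phi^{1-\gamma}$ is smooth and
$$
\frac{d}{dt}\left(\phi(t)^{1-\gamma}\right) = (1-\gamma)\phi(t)^{-\gamma}\phi'(t) = {\cal O}(1)\, \abs{X-Y}.
$$

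Integrating this bound on $[0,1]$ yields
$$
f(X)^{1-\gamma} - f(Y)^{1-\gamma} \leq C_1\, \abs{X-Y}
$$
for a constant $C_1$ depending only on the implicit constant in (\ref{eq2.2}) and on $\gamma$. Using that $f(Y) \geq 1$ we may rewrite this as
$$
f(X)^{1-\gamma} \leq f(Y)^{1-\gamma}\left(1 + C_1 \abs{X-Y}\right) \leq C_2\, \hk{X-Y}\, f(Y)^{1-\gamma},
$$
where the last inequality uses the elementary fact that $1 + C_1 |t| \leq C_2 \hk{t}$ uniformly in $t$, with $C_2$ depending on $C_1$. Raising both sides to the power $1/(1-\gamma)$ gives
$$
f(X) \leq C_2^{1/(1-\gamma)}\, \hk{X-Y}^{1/(1-\gamma)}\, f(Y),
$$
which is (\ref{eq2.3}).

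The argument is essentially a one–variable ODE comparison and presents no serious obstacle; the only point to notice is that the lower bound $f \geq 1$ is used twice, first to guarantee that $\phi^{1-\gamma}$ is smooth (in particular, does not vanish), and second to absorb the additive constant $1$ into a multiple of $f(Y)^{1-\gamma}$ when passing to the multiplicative form.
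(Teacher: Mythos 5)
Your proof is correct and follows essentially the same route as the paper: differentiate $f^{1-\gamma}$ along the segment from $Y$ to $X$, bound the derivative by a constant times $\abs{X-Y}$ using the hypothesis, integrate, absorb the additive term via $f\geq 1$, and raise to the power $1/(1-\gamma)$. No issues.
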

\begin{proof}
Setting
$$
\varphi(t) = f^{1-\gamma}(tX + (1-t)Y), \quad t\in [0,1],
$$
we write, using (\ref{eq2.2}),
\begin{multline*}
\varphi(1) - \varphi(0) = \int_0^1 \varphi'(t)\, dt \\
= (1-\gamma) \int_0^1 f^{-\gamma}(tX+(1-t)Y) \nabla f(tX + (1-t)Y)\cdot (X-Y)\, dt = {\cal O}(\abs{X-Y}).
\end{multline*}
We get, using that $f\geq 1$,
$$
f^{1-\gamma}(X) \leq f^{1-\gamma}(Y) + {\cal O}(\abs{X-Y}) \leq C\langle{X-Y\rangle} f^{1-\gamma}(Y), \quad C > 0,
$$
and (\ref{eq2.3}) follows.
\end{proof}

\medskip
\noindent
{\it Remark}. Using (\ref{eq1.2}), (\ref{eq1.3}), (\ref{eq1.4}), and (\ref{eq1.5}), we may check that more generally, we have $\partial^{\alpha} m = {\cal O}(m^{1/2})$, $\abs{\alpha}\geq 1$.

\medskip
\noindent
Associated to the order function $m$ in (\ref{eq2.1}) is the symbol class $S(m)$ defined by
\begeq
\label{eq2.4}
S(m) = \left\{a\in C^{\infty}(\real^{2n}); \,\, m^{-1}\partial^{\alpha} a \in L^{\infty}(\real^{2n}),\quad \forall \alpha \in \nat^{2n}\right\},
\endeq
and we immediately see, using (\ref{eq1.2}), (\ref{eq1.3}), (\ref{eq1.4}), (\ref{eq1.5}), (\ref{eq1.6}), and (\ref{eq2.1.1}), that $p\in S(m)$, with ${\rm Re}\, p\geq 0$. The assumption (\ref{eq1.10}) implies that ${\rm Re}\, p$ is elliptic at infinity in the sense that for some $\widetilde{C}>1$, we have
\begeq
\label{eq2.5}
{\rm Re}\, p(X) \geq \frac{1}{\widetilde{C}} m(X),\quad X = (x,\xi),\,\, \abs{X} \geq \widetilde{C}.
\endeq
It follows furthermore from (\ref{eq1.11}) that the set $\left({\rm Re}\,p\right)^{-1}(0)\subset \real^{2n}$ is finite,
\begeq
\label{eq2.6}
\left({\rm Re}\,p\right)^{-1}(0) = \left\{X_1,\ldots\, X_N\right\}, \quad X_j = (x_j, A(x_j)),
\endeq
and using (\ref{eq1.12}) we see that the points $X_j$, $\leq j \leq N$, are all doubly characteristic for the symbol $p$. For each $X_j\in \left({\rm Re}\,p\right)^{-1}(0)$, let us define $q_j$ to be the quadratic approximation of $p$ in a neighborhood of $X_j$, so that
\begeq
\label{eq2.7}
p(X_j + Y) = q_j(Y) + {\cal O}(Y^3),\quad Y \rightarrow 0.
\endeq
Using (\ref{eq1.1}) we see that
\begeq
\label{eq2.8}
q_j(Y) = \left(\eta - A'(x_j) y\right)^2 + \frac{1}{2} V''(x_j)y\cdot y,\quad Y = (y,\eta)\in \real^{2n}.
\endeq
Here $A'(x_j)$ is a real $n \times n$ matrix and $V''(x_j)$ is a complex symmetric $n \times n$ matrix such that ${\rm Re}\, V''(x_j)\geq 0$. Following~\cite{HiPr2},~\cite{HiPr3}, we would like to demand that the restriction of the quadratic form $q_j$ to the corresponding singular space should be elliptic, $1\leq j \leq N$. Here, following~\cite{HiPr1},~\cite{HiPrStVi}, we define the singular space $S$ of a complex-valued quadratic form $q$ on $\real^{2n}_{y,\eta}$ with ${\rm Re}\, q \geq 0$, as follows,
\begeq
\label{eq2.9}
S = \left\{Y\in \real^{2n}; H^k_{{\rm Im}\, q} {\rm Re}\, q(Y) = 0,\quad k\in \nat\right\},
\endeq
with $H_f = f'_{\eta}\cdot \partial_y - f'_y \cdot \partial_{\eta}$ being the Hamilton vector field of a function $f\in C^1(\real^{2n}_{y,\eta};\real)$.

\medskip
\noindent
The partial ellipticity property of the quadratic form in (\ref{eq2.8}) along its singular space can be easily characterized explicitly.
\begin{lemma}
\label{singular}
Let
\begeq
\label{eq2.10}
q(y,\eta) = \left(\eta - Ay\right)^2 + \frac{1}{2} Vy\cdot y,\quad (y,\eta) \in \real^{2n},
\endeq
where $A$ is a real $n \times n$ matrix and $V$ is a complex $n \times n$ symmetric matrix such that ${\rm Re}\, V \geq 0$. The restriction of $q$ to the singular space $S$, defined in {\rm (\ref{eq2.9})}, is elliptic precisely when the matrix $V$ is invertible.
\end{lemma}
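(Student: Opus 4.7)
\emph{Plan of proof.} The approach is to determine the singular space $S$ explicitly and then evaluate $q$ on it. Since $\Im q(y,\eta) = \frac{1}{2}\langle (\Im V)y, y\rangle$ is independent of $\eta$, the Hamilton vector field reduces to the pure $\eta$-derivation
$$H_{\Im q} = -(\Im V)y \cdot \partial_\eta.$$
Using this together with the explicit form $\Re q(y,\eta) = |\eta - Ay|^2 + \frac{1}{2}\langle (\Re V)y, y\rangle$, I would compute iteratively
$$H_{\Im q}\, \Re q = -2\langle (\Im V)y, \eta - Ay\rangle, \qquad H_{\Im q}^2\, \Re q = 2\,|(\Im V)y|^2,$$
and then observe that $H_{\Im q}^k\, \Re q \equiv 0$ for every $k\geq 3$: the function $H_{\Im q}^2\, \Re q$ depends on $y$ alone, while $H_{\Im q}$ differentiates only in $\eta$.

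Imposing the conditions $H_{\Im q}^k\, \Re q(Y)=0$ for $k\in\nat$, and exploiting $\Re V\geq 0$ (so that $\langle (\Re V)y,y\rangle = 0$ forces $(\Re V)y=0$), I would conclude
$$S = \{(y, Ay) : y\in \ker(\Re V)\cap \ker(\Im V)\} = \{(y, Ay) : Vy=0\}.$$
On such $(y, Ay)\in S$ one has $\eta - Ay = 0$ and $Vy=0$, hence
$$q(y, Ay) = 0 + \frac{1}{2}\langle Vy, y\rangle = 0.$$
Thus $q|_S$ vanishes identically, and its ellipticity amounts to $S = \{0\}$, i.e.\ $\ker V = \{0\}$, which is precisely the invertibility of $V$.

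The principal step, modest but essential, is the verification that the sequence $H_{\Im q}^k\, \Re q$ stabilizes at zero after two iterations; this is exactly what makes the singular space condition a finite system and reduces the whole question to linear algebra on $V$. Everything else is algebraic bookkeeping.
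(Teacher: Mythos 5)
Your computation of $S$ is correct and follows the same route as the paper: $H_{{\rm Im}\,q}=-V_2y\cdot\partial_\eta$ is a pure $\eta$-derivation, the iterates $H_{{\rm Im}\,q}^k\,{\rm Re}\,q$ equal $-2V_2y\cdot(\eta-Ay)$ for $k=1$, $2\abs{V_2y}^2$ for $k=2$, and vanish for $k\geq 3$, and since ${\rm Re}\,V\geq 0$ the $k=0$ condition splits into $\eta=Ay$ and $V_1y\cdot y=0$ (the latter upgrading to $V_1y=0$), so that $S=\{(y,Ay):\,y\in\real^n,\ Vy=0\}$ and $q|_S\equiv 0$. The gap is in your very last step: $S=\{0\}$ only says that the \emph{real} kernel $\{y\in\real^n:\,Vy=0\}$ is trivial, whereas invertibility of the complex matrix $V$ requires its kernel in $\comp^n$ to be trivial. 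For a general complex symmetric matrix these are not the same: $V=\begin{pmatrix}1&i\\ i&-1\end{pmatrix}$ has kernel spanned by $(-i,1)^t$, hence no nonzero real kernel vector, yet it is not invertible (of course ${\rm Re}\,V\not\geq 0$ there). So the implication ``$q|_S$ elliptic $\Rightarrow V$ invertible'' is not mere bookkeeping; it is precisely the point for which the paper invokes Proposition 4.4 of \cite{H95}, which says that when ${\rm Re}\,V\geq 0$ the kernel ${\rm Ker}\,V\subset\comp^n$ is the complexification of ${\rm Ker}\,V\cap\real^n$.

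The repair is short and should be included: if $V(a+ib)=0$ with $a,b\in\real^n$, then taking the real part of $V(a+ib)\cdot(a-ib)$ and using the symmetry of $V_1,V_2$ gives $V_1a\cdot a+V_1b\cdot b=0$, whence $V_1a=V_1b=0$ because $V_1\geq 0$; the equation $V(a+ib)=0$ then reduces to $V_2a=V_2b=0$, so $a,b\in{\rm Ker}\,V\cap\real^n$. Thus a nontrivial complex kernel forces a nonzero real kernel vector, i.e.\ a nonzero point of $S$, and with this addition your argument is complete (the converse direction, $V$ invertible $\Rightarrow S=\{0\}$, is immediate as you say).
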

\begin{proof}
Writing $V = V_1+ i V_2$, we compute, using (\ref{eq2.10}),
$$
H_{{\rm Im}\, q} {\rm Re}\, q(Y) = -2 V_2 y\cdot\left(\eta - Ay\right),
$$
$$
H_{{\rm Im}\, q}^2 {\rm Re}\, q(Y) = 2 \abs{V_2 y}^2,
$$
$$
H_{{\rm Im}\, q}^k {\rm Re}\, q(Y) = 0,\quad k\geq 3,
$$
and therefore the singular space $S$ is given by
\begeq
\label{eq2.11}
S = \left\{Y = (y,Ay)\in \real^{2n}; V_1 y\cdot y = 0,\,\, V_2 y = 0\right\}.
\endeq
We see that the restriction of $q$ to $S$ vanishes, and thus, $q$ is elliptic along its singular space $S$ precisely when $S = \{0\}$. Using (\ref{eq2.11}) and recalling that $V_1 \geq 0$, we see that the latter condition is equivalent to the fact that
\begeq
\label{eq2.12}
{\rm Ker}\,V \cap \real^{n} = \{0\}.
\endeq
To conclude the proof it suffices to notice, or recall from Proposition 4.4 of~\cite{H95}, that ${\rm Ker}\, V \subset \comp^n$ is the complexification of its intersection with $\real^n$.
\end{proof}

\medskip
\noindent
It is now clear, in view of the discussion above, that Theorem \ref{theo_spectra} follows by a direct application of~\cite[Theorem 1]{HiPr2}, \cite[Theorem 1.1]{HiPr3}.

\bigskip
\noindent
We now proceed to discuss the proof of Theorem \ref{theo_main}. When doing so, we observe first that it follows from the other assumptions in Theorem \ref{theo_spectra} that the assumption (\ref{eq1.6}) can be strengthened to the following estimate,
\begeq
\label{eq2.13}
\abs{V_2(x)} \leq {\cal O}(1)\left(V_1(x) + \abs{V_2'(x)}^2\right), \quad x \in \real^n.
\endeq
Indeed, when $x$ is large, (\ref{eq2.13}) follows from (\ref{eq1.6}) and (\ref{eq1.10}), and it suffices therefore to verify (\ref{eq2.13}) in a neighborhood of the (finite) set $V_1^{-1}(0)$ given in (\ref{eq1.11}). When $x_j\in V_1^{-1}(0)$, we write for $y\in {\rm neigh}(0,\real^n)$ small enough,
\begeq
\label{eq2.14}
V_1(x_j + y) + \abs{V_2'(x_j+y)}^2 = \frac{1}{2} V_1''(x_j)y\cdot y + \abs{V_2''(x_j)y}^2 + {\cal O}(y^3) \sim \abs{y}^2,
\endeq
in view of the fact that $V_1''(x_j) \geq 0$ and the matrix $V''(x_j)$ is bijective. Recalling that $V_2(x_j + y) = {\cal O}(y^2)$, in view of (\ref{eq1.12}), we conclude that (\ref{eq2.13}) holds near $x_j$, $1\leq j \leq N$ and hence, globally.

\medskip
\noindent
Thanks to the assumptions (\ref{eq1.2}), (\ref{eq1.3}), (\ref{eq1.4}), (\ref{eq1.5}), and (\ref{eq2.13}), we are now able to apply the results of~\cite{B2} to obtain some precise control on the resolvent of $P$ in a unbounded parabolic neighborhood of the imaginary axis, away from an ${\cal O}(h)$-neighborhood of the origin. It follows from~\cite[Theorem 1.1]{B2} that there exist $h_0 > 0$, $B>0$, and $C>0$ such that for all $h\in (0,h_0]$ and all $z\in \comp$ such that $\abs{z}\geq Ch$, ${\rm Re}\, z \leq B h^{2/3}\abs{z}^{1/3}$, the resolvent $(P-z)^{-1}$ exists and satisfies
\begeq
\label{eq2.14.1}
\norm{\left(P-z\right)^{-1}}_{{\cal L}(L^2,L^2)} \leq {\cal O}(1) h^{-2/3} \abs{z}^{-1/3}.
\endeq

\medskip
\noindent
Recalling the quadratic forms $q_j$ in (\ref{eq2.8}), $1\leq j \leq N$, we let $a>0$ be such that
\begeq
\label{eq2.15}
a\notin \bigcup_{j=1}^N {\rm Re}\left(\, {\rm Spec}\left(q_j^w(x,D_x)\right)\right).
\endeq
We claim that the line ${\rm Re}\, z = ah$ avoids the spectrum of $P$ and that we have
\begeq
\label{eq2.16}
\norm{\left(P-z\right)^{-1}}_{{\cal L}(L^2,L^2)} \leq {\cal O}\left(\frac{1}{h}\right),
\endeq
uniformly along the line ${\rm Re}\, z = ah$, for all $h>0$ small enough. Indeed, when ${\rm Re}\, z = ah$, $\abs{z}\leq Ch$, $C>0$ large enough, it follows from Theorem \ref{theo_spectra} that ${\rm dist}(z,{\rm Spec}(P))\geq h/{\cal O}(1)$, and the bound (\ref{eq2.16}) follows from (\ref{eq1.16}).
In the region where ${\rm Re}\, z = ah$, $\abs{z}\geq Ch$, we may apply the resolvent estimate (\ref{eq2.14.1}), assuming, as we may, that $0 < a < B C^{1/3}$, and the bound (\ref{eq2.16}) follows. All the assumptions of~\cite[Theorem 1.6]{HeSj10} are therefore satisfied, with the operator $A$ there given by $A = -P/h$, and an application of the latter result together with~\cite[Proposition 2.1]{HeSj10} and (\ref{eq2.16}) allows us to write
\begeq
\label{eq2.17}
e^{-tP/h} = e^{-tP/h}\Pi_+ + R(t), \quad t \geq 0.
\endeq
Here $\Pi_+$ is the spectral projection associated with the spectrum of $P$ in the region $\{z\in \comp; {\rm Re}\, z < ah\}$ and
\begeq
\label{eq2.18}
\norm{R(t)}_{{\cal L}(L^2, L^2)} \leq {\cal O}(1) e^{-ta},
\endeq
uniformly as $t\geq 0$, $h\rightarrow 0^+$. Here we have also used the fact that
$$
\Pi_+ = {\cal O}(1): L^2(\real^n) \rightarrow L^2(\real^n).
$$
The proof of Theorem \ref{theo_main} is complete.

\section{Eigenvalues for quadratic magnetic Schr\"odinger operators}
\label{spectra}
\setcounter{equation}{0}
Let $A$ be a real $n \times n$ matrix and let $V= V_1 + iV_2$ be a complex symmetric $n \times n$ matrix such that $V_1 \geq 0$ and $V$ is invertible. In this section, we shall determine the spectrum of a non-selfadjoint quadratic magnetic Schr\"odinger operator of the form
\begeq
\label{eq3.1}
Q = q^w(x,D_x) = \left(D_x - Ax\right)^2 + \frac{1}{2} Vx\cdot x.
\endeq
See also~\cite{MU}. Here when viewing $Q$ a closed densely defined operator on $L^2(\real^n)$, we shall equip it with the maximal domain,
\begeq
\label{eq3.2}
{\cal D}(Q) = \{u\in L^2(\real^n); Qu\in L^2(\real^n)\}.
\endeq
Performing a unitary conjugation of $Q$, we may and shall assume in what follows that the matrix $A$ is antisymmetric, $A^t = -A$.

\medskip
\noindent
Associated to the operator $Q$ is the quadratic Weyl symbol
\begeq
\label{eq3.21}
q(x,\xi) = (\xi - Ax)^2 + \frac{1}{2}V x\cdot x,
\endeq
with ${\rm Re}\, q \geq 0$. In Lemma \ref{singular} we verified that the singular space $S$ of $q$ satisfies $S = \{0\}$, and an application of~\cite[Theorem 1.2.2]{HiPr1} shows that the spectrum of $Q$ is discrete. Continuing to follow~\cite{HiPr1}, we shall now recall its explicit description, and to this end let us introduce the Hamilton map $F$ of $q$,
$$
F: \comp^{2n} \rightarrow \comp^{2n},
$$
given by
\begeq
\label{eq3.3}
FX = \frac{1}{2} H_q(X),\quad X \in \comp^{2n}.
\endeq
Here $H_q$ is the (linear) Hamilton vector field of $q$. A simple computation using (\ref{eq3.21}) and the fact that $A$ is antisymmetric shows that
\begeq
\label{eq3.4}
F = \begin{pmatrix} -A & 1\\
A^2 - V/2 & -A
\end{pmatrix}.
\endeq
The factorization
\begeq
\label{eq3.5}
\left(\lambda - F\right) \begin{pmatrix} 1 & 0\\
A + \lambda & 1
\end{pmatrix} = \begin{pmatrix} 0 & -1\\
T(\lambda) & A + \lambda
\end{pmatrix}, \quad \lambda \in \comp,
\endeq
where
\begeq
\label{eq3.6}
T(\lambda) = \lambda^2 + 2 \lambda A + \frac{1}{2}V,
\endeq
implies that $\lambda_0 \in \comp$ is an eigenvalue of $F$ precisely when ${\rm Ker}\, (T(\lambda_0)) \neq \{0\}$, so that $\lambda_0$ is an eigenvalue of the quadratic $n\times n$ matrix pencil $T(\lambda)$. Furthermore, using (\ref{eq3.5}) we see that the multiplicities agree, so that
\begeq
\label{eq3.7}
{\rm tr}\, \frac{1}{2\pi i}\int_{\abs{\lambda - \lambda_0} = \varepsilon} \left(\lambda - F\right)^{-1}\, d\lambda =
{\rm tr}\, \frac{1}{2\pi i}\int_{\abs{\lambda - \lambda_0} = \varepsilon} T(\lambda)^{-1} \partial_{\lambda} T(\lambda)\, d\lambda, \quad 0 < \varepsilon \ll 1.
\endeq
Indeed, when verifying (\ref{eq3.7}), we may use the following general observation, see~\cite{Sj01}. Let $\Omega \subset \comp$ be open and let $A(\lambda)$, $B(\lambda)$, $C(\lambda)$, $\lambda \in \Omega$, be holomorphic families of invertible $N\times N$ matrices, such that
\begeq
\label{eq3.71}
A(\lambda) = B(\lambda) C(\lambda).
\endeq
Differentiating (\ref{eq3.71}) and using the cyclicity of the trace, we get
\begeq
\label{eq3.72}
{\rm tr}(B^{-1}\partial_{\lambda} B) = {\rm tr}(A^{-1}\partial_{\lambda}A - C^{-1}\partial_{\lambda}C),
\endeq
and applying (\ref{eq3.72}) to the factorization (\ref{eq3.5}), we obtain (\ref{eq3.7}).

\bigskip
\noindent
The matrix $V$ is invertible and it follows therefore from (\ref{eq3.5}), (\ref{eq3.6}) that the Hamilton map $F$ is bijective. More generally, from~\cite{HiPr1},~\cite{OtPaPr} we know that the fact that the singular space of $q$ is trivial implies that the spectrum of $F$ avoids the real axis. This can also be seen directly using (\ref{eq3.5}), (\ref{eq3.6}) and verifying that the equation $T(\lambda) x = 0$ has no non-trivial solutions $x\in \comp^n$ when $\lambda \in \real$. To this end, observing that $Ax\cdot \overline{x}$ is purely imaginary, we write
$$
0 = {\rm Re}\, \left(T(\lambda)x\cdot \overline{x}\right) = \frac{1}{2} V_1 x\cdot \overline{x} + \lambda^2 \abs{x}^2 = 0.
$$
Here $V_1 \geq 0$ and therefore $x = 0$, since $\lambda = 0$ is not an eigenvalue. The relation $T(\lambda)^t = T(-\lambda)$, which holds due to $A$ being antisymmetric, implies the general fact that $\lambda\in \comp$ is an eigenvalue of $F$ precisely when so is $-\lambda$, and using the definition of the multiplicity given by the right hand side of (\ref{eq3.7}) we also recover the fact that the multiplicities agree, see~\cite[Section 21.5]{H_book}. Let $\lambda_1,\ldots, \lambda_n$ be the eigenvalues of $F$, counted according to their multiplicity, such that ${\rm Re}\, (\lambda_j/i) > 0$. The following result now follows from~\cite[Theorem 1.2.2]{HiPr1} and the discussion above.

\begin{theo}
Let $A$ be a real $n\times n$ matrix and let $V$ be a complex symmetric $n \times n$ matrix such that ${\rm Re}\, V\geq 0$ and $V$ is invertible. The spectrum of the quadratic operator $Q = q^w(x,D_x)$ in {\rm (\ref{eq3.1})} is discrete and is given by the eigenvalues of the form
$$
\sum_{j=1}^n \frac{\lambda_j}{i} \left(1 + 2\nu_{j,\ell}\right), \quad \nu_{j,\ell}\in \nat.
$$
Here $\lambda_j \in \comp$ are such that ${\rm Im}\, \lambda_j > 0$ and ${\rm Ker}\, (T(\lambda_j)) \neq \{0\}$, with the quadratic matrix pencil $T(\lambda)$ given in {\rm (\ref{eq3.6})}.
\end{theo}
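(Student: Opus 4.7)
The plan is to deduce the theorem directly from \cite[Theorem 1.2.2]{HiPr1}, using the machinery already set up in Section \ref{spectra}. Recall that the cited result asserts the following: if $q$ is a complex-valued quadratic form on $\real^{2n}$ with ${\rm Re}\, q \geq 0$ and singular space $S = \{0\}$, then $q^w(x,D_x)$, equipped with the maximal domain, has purely discrete spectrum consisting of the eigenvalues
$$
\sum_{j=1}^{n} \frac{\lambda_j}{i}\left(1 + 2\nu_{j,\ell}\right), \quad \nu_{j,\ell} \in \nat,
$$
where $\lambda_1,\ldots,\lambda_n$ enumerates, with algebraic multiplicity, those eigenvalues of the Hamilton map $F$ satisfying ${\rm Re}(\lambda_j/i) > 0$, i.e., ${\rm Im}\,\lambda_j > 0$.

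First I would verify the hypothesis $S = \{0\}$ for the specific form $q$ in (\ref{eq3.21}). This is precisely the content of Lemma \ref{singular}, which was proved under the assumption that $V$ is invertible; the hypothesis of the theorem therefore ensures that \cite[Theorem 1.2.2]{HiPr1} is applicable, yielding both discreteness and the structural form of the eigenvalues. What remains is to re-express the relevant eigenvalues of $F$ in terms of the quadratic matrix pencil $T(\lambda)$ defined in (\ref{eq3.6}).

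Next I would invoke the factorization (\ref{eq3.5}), which identifies the eigenvalues of $F$ with the values of $\lambda$ for which $T(\lambda)$ fails to be injective, and, via the general trace identity (\ref{eq3.72}), shows that the algebraic multiplicities agree — this is exactly (\ref{eq3.7}). The eigenvalues of $F$ are symmetric under $\lambda \mapsto -\lambda$, since $T(\lambda)^t = T(-\lambda)$ follows from $A^t = -A$; and the excerpt establishes directly, by taking the real part of $T(\lambda)x\cdot \overline{x}$ and using $V_1 \geq 0$ together with the invertibility of $V$, that no eigenvalue of $F$ lies on the real axis. Consequently exactly $n$ eigenvalues of $F$, counted with multiplicity, lie in the upper half plane ${\rm Im}\,\lambda > 0$, and these are the $\lambda_1,\ldots,\lambda_n$ appearing in the statement.

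The main obstacle, if one wishes to call it that, is the appeal to \cite[Theorem 1.2.2]{HiPr1} itself, whose proof relies on a complex symplectic normal form diagonalizing $F$ along its stable and unstable invariant subspaces and a Mehler-type analysis of the resulting semigroup. Given that result, the proof here is a bookkeeping exercise: Lemma \ref{singular} supplies the triviality of the singular space, and the factorization (\ref{eq3.5}) together with the $\pm$-symmetry and the exclusion of the real axis translates the Hamilton-map spectral data into the pencil formulation stated in the theorem.
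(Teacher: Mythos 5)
Your proposal is correct and follows essentially the same route as the paper: verify $S=\{0\}$ via Lemma \ref{singular} (invertibility of $V$), apply \cite[Theorem 1.2.2]{HiPr1} for discreteness and the eigenvalue formula in terms of the Hamilton map $F$, and then translate the spectral data of $F$ into the pencil $T(\lambda)$ through the factorization (\ref{eq3.5}), the trace identity (\ref{eq3.7}), the symmetry $T(\lambda)^t=T(-\lambda)$ coming from the reduction to antisymmetric $A$, and the exclusion of real eigenvalues using $V_1\geq 0$ and the invertibility of $V$. No gaps beyond the cited result, which the paper also takes as given.
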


\end{document}